\numberwithin{equation}{section}
\newtheorem*{thm}{Theorem}
\newtheorem{lemma}{Lemma}
\newtheorem{prop}[lemma]{Proposition}
\theoremstyle{remark}
\newtheorem*{rem}{Remark}
\newcommand{\Gal}{\mathop{\rm Gal}\nolimits}
\newcommand{\lhs}{\mathop{\rm LHS}\nolimits}
\newcommand{\GL}{\mathop{\rm GL}\nolimits}
\newcommand{\DL}{\mathop{\rm R}\nolimits}
\newcommand{\EE}{{\mathcal{E}}}
\newcommand{\NN}{{\mathcal{N}}}
\newcommand{\map}[3]{#1\colon #2 \longrightarrow #3}
\newcommand{\conn}{^\circ}
\newcommand{\inv}{^{-1}}
\newcommand{\autgp}{\langle\varepsilon\rangle} 
\let\tilde\widetilde
\newcommand{\lsup}[1]{{}^{#1}}
\newcommand{\set}[2]{ 
	{\bigl\{\bigl.
	#1\vphantom{#2  
		}\,\bigr|
	\,#2\bigr\}}}
\author[Adler]{Jeffrey D.~Adler}
\address{Department of Mathematics and Statistics\\
American University\\
Washington, DC 20016-8050}
\email[Adler]{jadler@american.edu}
\author[Cassel]{Michael Cassel}
\curraddr[Cassel]{Columbia University \\
116th St. and Broadway, \\
New York, NY 10027}
\email{m.cassel@columbia.edu}
\author[Lansky]{Joshua M.~Lansky} 
\email[Lansky]{lansky@american.edu}
\author[Morgan]{Emma Morgan}
\curraddr[Morgan]{Office of Institutional Research and Evaluation \\
Tufts University \\
Medford, MA 02155}
\email{emma.morgan@tufts.edu}
\author[Zhao]{Yifei Zhao}
\curraddr[Zhao]{Department of Mathematics \\
Harvard University \\
Cambridge, MA 02138}
\email{yifei@math.harvard.edu}
\title[Character relation]{Lifting representations of finite reductive groups: a character relation}
\date{\today}
\subjclass{Primary 20C33. Secondary 20G40.}
\keywords{Finite reductive groups, Deligne-Lusztig representations, liftings, character relations}
\begin{document}

\begin{abstract}
Given a connected reductive group $\tilde{G}$ over a finite field $k$,
and a semisimple $k$-automorphism $\varepsilon$ of $\tilde{G}$ of finite order,
let $G$ denote the connected part of the group of $\varepsilon$-fixed points.
Then two of the authors have previously shown that there exists a natural lifting from series of
representations of $G(k)$ to series for $\tilde{G}(k)$.
In the case of Deligne-Lusztig representations, we show that this lifting satisfies
a character relation analogous to that of Shintani.
\end{abstract}

\maketitle

\addtocounter{section}{-1}
\section{Introduction}
\label{sec:intro}
Suppose $k$ is a finite field,
$\tilde{G}$ is a connected reductive $k$-group,
and $\varepsilon$ is a semisimple $k$-automorphism of $\tilde{G}$
of finite order $\ell$.
Let $G$ be the connected part of the group $\tilde{G}^\varepsilon$
of $\varepsilon$-fixed points of $\tilde{G}$.
We will see (Proposition \ref{prop:intro})
that $G$ is also a connected reductive $k$-group.
Two of the authors have constructed a natural lifting
\cite{adler-lansky:lifting}
from series of irreducible representations of $G(k)$ to analogous
series for $\tilde{G}(k)$.
In certain cases,
this lifting is known to coincide with Shintani lifting
(see \cite{gyoja:liftings}*{Thm.~7.2}, \cite{digne:arcata}*{Cor.~3.6}, or \cite{silberger-zink:level-zero-matching}*{Prop.~B4.4}, for example),
so it is natural to ask whether there is a relation, analogous to that of Shintani,
between the character of a representation
$\pi$ of $G(k)$ and the $\varepsilon$-twisted character of its lift $\tilde\pi$.
The purpose of this note is to prove an affirmative answer in the case where $\pi$
is a Deligne-Lusztig representation, irreducible or not.

Let $\tilde{G}^*$ and $G^*$ denote the duals of $\tilde{G}$ and $G$.
For each semisimple element $z\in G^*(k)$, one obtains a collection
$\EE_z(G(k))$ of irreducible representations of $G(k)$,
and these collections, known as \emph{rational Lusztig series},
partition
the set $\EE(G(k))$ of (equivalence classes of)
irreducible representations of $G(k)$
\cite{lusztig:chars-finite}*{\S14.1}.
For example,
suppose that $z$ is regular in $G^*$,
and let $T^*\subseteq G^*$ be the unique maximal $k$-torus
containing $z$.
Then the pair $(T^*,z)$ corresponds to a pair $(T,\theta)$, where
$T\subseteq G$ is a maximal $k$-torus, and $\theta$ is a complex-valued character of $T(k)$.
This latter pair is uniquely determined up to $G(k)$-conjugacy.
The Lusztig series $\EE_z(G(k))$ corresponding to $z$ is the set of irreducible components
of the Deligne-Lusztig virtual representation whose character is $\DL_T^G \theta$.
An earlier work
\cite{adler-lansky:lifting}*{Corollary 11.3}
presents a natural
map from semisimple conjugacy classes in $G^*(k)$
to semisimple classes in $\tilde{G}^*(k)$,
thus lifting
each Lusztig series for $G(k)$ to one for $\tilde{G}(k)$.
The series of representations coming from $\pm \DL_T^G\theta$ lifts to that
coming from $\pm \DL_{\tilde{T}}^{\tilde{G}} \tilde\theta$,
where we will see that $\tilde{T} := C_{\tilde{G}}(T)$
is a maximal $k$-torus in $\tilde{G}$,
and $\tilde\theta = \theta \circ \NN$,
where $\map\NN{\tilde{T}}T$ is the norm map defined by
\[
\NN(t) = t \varepsilon(t) \cdots \varepsilon^{\ell-1}(t).
\]
(Of course, one could define a similar map $\NN$ on any $\varepsilon$-invariant torus in $\tilde{G}$.)
In order to understand better this lifting of representations,
one would like to have a relation between the character $\DL_T^G\theta$
(for $\theta$ an arbitrary character of $T(k)$, not necessarily associated to a regular
element of $G^*(k)$)
and the $\varepsilon$-twisted character $(\DL_{\tilde{T}}^{\tilde{G}}\tilde\theta)_\varepsilon$
associated to the
$\varepsilon$-invariant character $\DL_{\tilde{T}}^{\tilde{G}}\tilde\theta$.
We prove that such a relation 
holds at sufficiently regular points,
provided that one gathers together a ``packet'' of characters of $G(k)$,
including
$\DL_T^G\theta$,
that each lift to $\DL_{\tilde{T}}^{\tilde{G}}\tilde\theta$.

Suppose we have a 
\emph{Borel-torus pair} $(B,T)$ for $G$.
That is, we have
a maximal $k$-torus $T\subseteq G$,
together with a Borel subgroup $B\subseteq G$ containing $T$,
not necessarily defined over $k$.
From Proposition \ref{prop:intro}, we will obtain
an $\varepsilon$-invariant Borel-torus pair $(\tilde{B},\tilde{T})$
in $\tilde{G}$,
where $\tilde{T} = C_{\tilde{G}}(T)$ as above.
Let $\autgp$ be the group generated by $\varepsilon$.
Recall that an element of a reductive group is \emph{regular} if the connected
part of its centralizer is a torus.
\begin{thm}
Suppose $\tilde{s} \in \tilde{G}(k)$ belongs to an $\varepsilon$-invariant, maximal $k$-torus $\tilde S$,
and that 
$\NN(\tilde{s})$ is regular
in $\tilde{G}$.
Then
\begin{equation}
\label{eqn:main1}
(\DL_{\tilde{T}}^{\tilde{G}}\tilde{\theta})_{\varepsilon}(\tilde{s})
=
\left(
	\sum_{\tilde g \in G(k) \backslash (\tilde{G}(k) / \tilde{T}(k))^\varepsilon}
	\DL_{\lsup{\tilde g}T}^G \lsup{\tilde g}\theta
\right)
(\NN(\tilde{s})).
\end{equation}
\end{thm}

\begin{rem}
Let us comment on some of the terms in Equation \eqref{eqn:main1}.
\begin{enumerate}[(a)]
\item
Here is what we mean by
$(\DL_{\tilde{T}}^{\tilde{G}}\tilde{\theta})_{\varepsilon}$.
Since 
$\tilde{\theta}$ is an $\varepsilon$-invariant character of $\tilde{T}(k)$,
we have that $\varepsilon$ 
acts on the Deligne-Lusztig variety corresponding to $(\tilde{B},\tilde{T},\tilde{\theta})$,
and thus on the virtual representation
whose character is
$\DL_{\tilde T}^{\tilde G} \tilde\theta$.
That is, even if this representation is reducible, we can form its $\varepsilon$-twisted character.
To do so, extend $\tilde\theta$ to a character of $\tilde{T}(k)\rtimes\autgp$
by setting
$\tilde\theta(\varepsilon) = 1$.
Define the $\varepsilon$-twisted Deligne-Lusztig character
$(\DL_{\tilde{T}}^{\tilde{G}}\tilde{\theta})_{\varepsilon}$ induced from $\tilde{\theta}$ by
$(\DL_{\tilde{T}}^{\tilde{G}}\tilde{\theta})_{\varepsilon}(g) = 
(\DL_{\tilde{T}\rtimes\autgp }
	^{\tilde{G}\rtimes\autgp}\tilde\theta)(g\varepsilon)$
for $g\in\tilde{G}(k)$.
(See~\cite{digne-michel:non-connected} for the definition of Deligne-Lusztig induction for nonconnected groups.)

Note that if $\varepsilon$ is quasi-central,
i.e., the Weyl groups $W(G,T)$ and $W(\tilde{G},\tilde T)$ satisfy
$W(G,T) = W(\tilde{G},\tilde T)^\varepsilon$
(see~\cite{digne-michel:non-connected}*{D\'efinition-Th\'eor\`eme 1.15} for equivalent formulations),
then
$(\DL_{\tilde{T}}^{\tilde{G}}\tilde{\theta})_{\varepsilon}$
doesn't depend on the choice of $\tilde{B}$, from
the remark after
\cite{digne-michel:non-connected}*{Th\'eor\`eme 4.5}.
More generally, there could be several Borel subgroups $\tilde{B} \subseteq \tilde{G}$
such that $(\tilde{B}^\varepsilon)\conn= B$,
and we don't know if the twisted character is independent of the choice of $\tilde{B}$.
However, our theorem will remain valid for any such choice.

\item
On the right-hand side of Equation \eqref{eqn:main1},
$\tilde g$ runs over a set of double coset representatives.  For each such $\tilde g$,
we have that $\lsup{\tilde g}T$ is a maximal $k$-torus in $G$ and $\lsup{\tilde g} \theta$
is a complex-valued character of $\lsup{\tilde g}T(k)$.
The choice of representative $\tilde g$ affects the pair $(\lsup{\tilde g}T, \lsup{\tilde g}\theta)$
only up to $G(k)$-conjugacy, so it does not affect the character
$\DL_{\lsup{\tilde g}T}^G \lsup{\tilde g}\theta$
appearing in the corresponding summand.

\item
We note that $G(k) \backslash (\tilde{G}(k) / \tilde{T}(k))^\varepsilon$ can be viewed as the
set of $G(k)$-conjugacy classes of $\varepsilon$-invariant Borel-torus pairs
that are $\tilde G(k)$-conjugate to $(\tilde B,\tilde T)$.
As $\tilde{g}$ runs over a set of double coset representatives,
the $k$-tori $\lsup{\tilde{g}} T \subset G$ are related in a way that is analogous
to stable conjugacy,
where $\tilde{G}(k)$ plays the role usually played by $G(\overline k)$,
where $\overline k$ is an algebraic closure of $k$.
Moreover, the set of characters appearing in the right-hand side of \eqref{eqn:main1}
is analogous to an $L$-packet,
and their sum is ``stable'', in the sense that it is constant
on the intersections with $G(k)$ of appropriate conjugacy classes in $\tilde{G}(k)$.
\end{enumerate}
\end{rem}

Now let's consider some special cases.
\begin{enumerate}[(a)]
\item
\label{item:restriction-of-scalars}
One can show that when $\varepsilon$ is quasi-central, the index set for the summation is a singleton,
so the theorem asserts that
$(\DL_{\tilde{T}}^{\tilde{G}}\tilde{\theta})_{\varepsilon}(\tilde{s})
=(\DL_{T}^G \theta)(\NN(\tilde{s}))$.
\item
In particular,
suppose $\tilde{G} = R_{E/k} G$ is obtained from $G$ via restriction of scalars over a cyclic extension $E/k$ and
$\varepsilon$ is the algebraic $k$-automorphism of $\tilde{G}$ associated to the action
of a generator of the Galois group $\Gal(E/k)$.
Given a representation $\pi$ of $G(k)$, one often has an associated
representation $\tilde\pi$ of $\tilde{G}(k)$, known as the \emph{Shintani lift}
of $\pi$.
(See \cite{kawanaka:shintani} for a discussion.)
The character of $\pi$ and the $\varepsilon$-twisted character of $\tilde\pi$
satisfy the Shintani relation:
$\Theta_{\tilde\pi,\varepsilon}
=
\Theta_\pi\circ\NN$,
where $\NN$ has been extended to a map on all
(not necessarily semisimple) conjugacy classes.
From work of Digne \cite{digne:shintani}*{Cor.\ 3.6},
one already knows that if $\DL_T^G \theta$ has a Shintani lift, then it must be
$\DL_{\tilde T}^{\tilde G} \tilde\theta$.
Thus, if one restricts one's attention to Deligne-Lusztig characters
and to the kind of elements that we consider,
our character relation is a generalization of Shintani's.
\item
Consider the automorphism of $\GL(2)$ given by
$\varepsilon(g) = {}^t g \inv$.
Then a relation analogous to that in the Theorem holds for all irreducible
characters, not just those of Deligne-Lusztig type.
Moreover, the relation can be extended to unipotent elements,
but it fails if
$\tilde{s}$ is regular but $\NN(\tilde s)$ is singular.
\end{enumerate}

Comparing our theorem with \cite{digne-michel:non-connected}*{Proposition 2.12}, we see that the latter shows
that the restriction to $G(k)$ of a twisted character of $\tilde{G}(k)$
is a certain twisted character of $G(k)$, at least in the case where $\varepsilon$ is quasi-central.
On the other hand, our theorem concerns a lifting of characters from $G(k)$ to $\tilde{G}(k)$ via the norm map,
and this lifting
is not an inverse of restriction.
As expected, the two formulas agree in those situations where both are applicable, but such situations are rare.
Moreover, while we make regularity assumptions on our element $\tilde{s}$, we do not assume that $\varepsilon$
is quasi-central.

\textsc{Acknowledgements.}
The authors were partially supported by a 
National Science Foundation Focused Research Group grant (DMS-0854844). 
The fourth-named author was partially supported by 
the Frank Cox Jones Scholarship Fund and
a Dean's Undergraduate Research Award from the College of Arts and Sciences of American
University.
We have benefited from conversations with Prof.\ Jeffrey Hakim,
and the comments of an anonymous referee.

\section{Preliminary results}
Given any algebraic group $H$, we denote by $H\conn$ the connected component of the identity in $H$. If $S$ is an algebraic subgroup of $H$, then $C_H(S)$ denotes the centralizer of $S$ in $H$.
Similarly, for an element $h\in H$, we let $C_H(h)$ denote its centralizer.

As in~\S\ref{sec:intro},
let $k$ denote a finite field,
$\tilde G$ a connected reductive $k$-group,
$\varepsilon$ a semisimple $k$-automorphism of $\tilde G$
of finite order $\ell$,
and
$G = (\tilde{G}^\varepsilon)\conn$.

Most of the following result appears in the statement or proof of \cite{steinberg:endomorphisms}*{Theorem 8.2}.
Other versions appear in 
\cite{kottwitz-shelstad:twisted-endoscopy}*{Theorem 1.1.A}
and
\cite{digne-michel:non-connected}*{Th\'eor\`eme 1.8}.
A version that includes the rationality of $G$ is in
\cite{adler-lansky:lifting}*{Proposition 3.5}.
\begin{prop}
\label{prop:intro}
With notation as above, one has:
\begin{itemize}
\item
$G$ is a connected reductive $k$-group.
\item
For every $\varepsilon$-invariant Borel-torus pair $(\tilde{B},\tilde{T})$
for $\tilde{G}$,
one has a Borel-torus pair $((\tilde{B}^\varepsilon)\conn,(\tilde{T}^\varepsilon)\conn)$
for $G$.
Moreover,
$(\tilde{T}^\varepsilon)\conn = \tilde{T} \cap G$.
\item
For every Borel-torus pair $(B,T)$ for $G$, 
one has an $\varepsilon$-invariant Borel-torus pair $(\tilde{B},\tilde{T})$
where $\tilde{T} = C_{\tilde{G}}(T)$, and such that $(\tilde{B}^\varepsilon)\conn = B$.
\end{itemize}
\end{prop}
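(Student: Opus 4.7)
The plan is to deduce each bullet from Steinberg's theory of fixed points of semisimple automorphisms, supplemented by a root-system argument for the last bullet. The first bullet, together with the fact that $(\tilde{B}^\varepsilon, (\tilde{T}^\varepsilon)\conn)$ is a Borel-torus pair of $G$, is a standard consequence of Steinberg's results on semisimple automorphisms of connected reductive groups (the same circle of ideas as \cite{steinberg:endomorphisms} already invoked in the paper): $(\tilde{G}^\varepsilon)\conn$ is reductive, and an $\varepsilon$-stable Borel-torus pair of $\tilde{G}$ yields one for $G$ on passing to fixed points (the torus, to its identity component). Rationality over $k$ follows because $\varepsilon$ is a $k$-automorphism, so $\tilde{G}^\varepsilon$ is a $k$-scheme and its identity component inherits a $k$-structure.

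The equality $(\tilde{T}^\varepsilon)\conn = \tilde{T} \cap G$ is a short argument given the above. The inclusion $\subseteq$ is immediate, since $(\tilde{T}^\varepsilon)\conn$ is connected, contained in $\tilde{T}$, and contained in $\tilde{G}^\varepsilon$, hence in $G$. Conversely, $\tilde{T} \cap G$ is a subgroup of the abelian group $\tilde{T}$ and so commutes with $(\tilde{T}^\varepsilon)\conn$; thus $\tilde{T} \cap G \subseteq C_G((\tilde{T}^\varepsilon)\conn)$. Because $(\tilde{T}^\varepsilon)\conn$ is a maximal torus of the connected reductive group $G$, it is self-centralizing, which gives the reverse inclusion.

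For the third bullet, I would fix any $\varepsilon$-stable Borel-torus pair $(\tilde{B}_0, \tilde{T}_0)$ of $\tilde{G}$, let $(B_0, T_0)$ be the Borel-torus pair of $G$ produced by the second bullet, and use conjugacy of Borel-torus pairs in the connected group $G$ to find $g \in G$ with $g(B_0, T_0)g\inv = (B, T)$. Setting $(\tilde{B}, \tilde{T}) := g(\tilde{B}_0, \tilde{T}_0)g\inv$ yields an $\varepsilon$-stable Borel-torus pair of $\tilde{G}$ containing $T$, since $g \in G$ is $\varepsilon$-fixed; and because centralizers commute with conjugation, identifying $\tilde{T}$ with $C_{\tilde{G}}(T)$ reduces to the unconjugated case $\tilde{T}_0 = C_{\tilde{G}}(T_0)$.

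The main obstacle is this last identity, equivalent to showing that no root $\alpha \in \Phi(\tilde{G}, \tilde{T}_0)$ restricts trivially to $T_0 = (\tilde{T}_0^\varepsilon)\conn$. The $\varepsilon$-stability of $\tilde{B}_0$ makes $\varepsilon$ permute the simple roots $\Delta$ while preserving positivity, so for each $\varepsilon$-orbit $O \subseteq \Delta$ the sum $\alpha_O := \sum_{\beta \in O} \beta$ is a nonzero element of $X^*(\tilde{T}_0)^\varepsilon$ (being a positive sum of simple roots). Under the natural isomorphism $X^*(\tilde{T}_0)^\varepsilon \otimes \mathbb{Q} \xrightarrow{\sim} X^*(T_0) \otimes \mathbb{Q}$, the restrictions $\alpha_O|_{T_0}$ are, up to positive scalars, the simple roots of $G$ relative to $T_0$, hence linearly independent and nonzero. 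An arbitrary root of $\tilde{G}$ is a nontrivial same-sign integer combination of simple roots, so its restriction to $T_0$ is a nontrivial same-sign combination of the $\alpha_O|_{T_0}$ and therefore nonzero. This forces $C_{\tilde{G}}(T_0) = \tilde{T}_0$, completing the proof.
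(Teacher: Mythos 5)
The paper does not supply a proof of this proposition; it is quoted verbatim from \cite{adler-lansky:lifting}*{Proposition 3.5}, so there is no ``paper's own proof'' against which to compare your argument. Taken on its own terms, your proof is correct, and the key non-routine step is exactly the one you isolate: showing that $C_{\tilde{G}}(T_0) = \tilde{T}_0$ for $T_0 = (\tilde{T}_0^\varepsilon)^\circ$, which you reduce (correctly, since $C_{\tilde{G}}(T_0)$ is connected reductive with root system $\{\alpha \in \Phi(\tilde{G},\tilde{T}_0) : \alpha|_{T_0} = 1\}$) to showing that no root of $\tilde{G}$ dies on $T_0$.

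Two small remarks on that step. First, it is cleaner to avoid the claim that the $\alpha_O|_{T_0}$ are ``the simple roots of $G$'' --- you do not need that (and in non-simply-laced folding situations the roots of $G$ need not be exactly the restrictions of roots of $\tilde{G}$); all you need is linear independence, which you already get from the injectivity of $X^*(\tilde{T}_0)^\varepsilon \otimes \mathbb{Q} \hookrightarrow X^*(T_0)\otimes\mathbb{Q}$, together with the fact that the $\alpha_O$ are sums of simple roots over disjoint orbits. Second, the observation that makes the positivity argument work --- that any two simple roots in the same $\varepsilon$-orbit have the \emph{same} restriction to $T_0$ (because $T_0$ is pointwise $\varepsilon$-fixed) --- is implicit in your write-up but worth making explicit, as it is what turns $\alpha|_{T_0}$ into a nonnegative combination of the linearly independent $\alpha_O|_{T_0}$ with not all coefficients zero. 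The injectivity of the restriction map in turn follows from $X_*(T_0) = X_*(\tilde{T}_0)^\varepsilon$ and the nondegeneracy of the $\varepsilon$-invariant part of the character--cocharacter pairing over $\mathbb{Q}$; spelling that out would make the argument fully self-contained. With those clarifications, the proof stands.

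Your appeals to Steinberg for the first two bullets (reductivity of $G$, descent of an $\varepsilon$-stable Borel--torus pair, $k$-rationality of the fixed-point scheme) and your short self-centralizing-torus argument for $(\tilde{T}^\varepsilon)^\circ = \tilde{T}\cap G$ are both fine. The conjugation trick reducing bullet three to the case $(B,T) = (B_0,T_0)$ is also correct, since $g \in G(\bar k)$ is $\varepsilon$-fixed and $C_{\tilde G}(gT_0g\inv) = g\,C_{\tilde G}(T_0)\,g\inv$.
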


From now on, we
fix a maximal $k$-torus $T\subseteq G$, and thus obtain
an $\varepsilon$-invariant maximal $k$-torus $\tilde{T} = C_{\tilde{G}}(T)$
as in Proposition \ref{prop:intro},
and a norm map $\map\NN{\tilde{T}}T$ as in \S\ref{sec:intro}.

The following result
concerns conjugacy and twisted conjugacy in $G$ and $\tilde G$.
\begin{lemma}
\label{lem:twisted-conjugacy}
Suppose $\tilde{S}$ is an 
$\varepsilon$-invariant maximal $k$-torus in $\tilde{G}$.
Let $S = \tilde S\cap G$.
Suppose that $\tilde s \in \tilde{S}(k)$,
and that $s:= \NN(\tilde{s})\in S(k)$ is regular in $\tilde{G}$.
Let $\tilde g \in \tilde{G}(k)$.
\begin{enumerate}[(i)]
\item
\label{item:in-borus}
There is an $\varepsilon$-invariant Borel subgroup of $\tilde{G}$ containing $\tilde{S}$.
\item
\label{item:transport1}
If $\tilde g\inv s\tilde g \in T(k)$, then $\tilde g\inv \tilde S\tilde g = \tilde T$.
\item
\label{item:transport2}
If $\tilde g\inv \tilde s\tilde g \in \tilde T(k)$ and $\tilde g\inv\varepsilon(\tilde g)\in \tilde T(k)$, then
$\tilde g\inv  S \tilde g = T$.
\item
\label{item:transport3}
If $\tilde g\inv \tilde s \varepsilon(\tilde g) \in \tilde T(k)$, then
$\tilde g\inv\varepsilon(\tilde g)\in \tilde T(k)$.
\end{enumerate}
\end{lemma}

\begin{proof}
To prove (\ref{item:in-borus}),
let $S'$ be a maximal $k$-torus in $G$ such that $s\in S'(k)$,
and let $\tilde{S}' = C_{\tilde{G}}(S')$.
From Proposition \ref{prop:intro}, $\tilde{S}'$ is a maximal torus
in $\tilde{G}$, and it is contained in an $\varepsilon$-invariant Borel subgroup of $\tilde{G}$.
But
$
\tilde{S}' = C_{\tilde{G}}(S') \subseteq C_{\tilde{G}}(s)\conn = \tilde{S},
$
and therefore, $\tilde{S}' = \tilde{S}$.

To prove (\ref{item:transport1}),
note that
$
\tilde g\inv \tilde S \tilde g = \tilde g\inv C_{\tilde G}(s)\conn \tilde g
= C_{\tilde G}(\tilde g\inv s \tilde g)\conn
= \tilde T
$.

For (\ref{item:transport2}), note that it follows immediately from the assumptions that 
$\tilde g\inv\tilde s \varepsilon(\tilde g)\in\tilde T(k)$.
Then
$\tilde g\inv s\tilde g = \NN(\tilde g\inv\tilde s \varepsilon(\tilde g))\in \NN(\tilde T(k))\subseteq T(k)$.
From (\ref{item:transport1}),
$\tilde g\inv \tilde S\tilde g = \tilde T$.
From (\ref{item:in-borus}) and Proposition \ref{prop:intro},
it is thus enough to show that
$(\tilde g\inv \tilde S\tilde g)^\varepsilon = \tilde g\inv \tilde S^\varepsilon \tilde g$.
For $x \in \tilde{S}(\overline k)$,
\begin{align*}
\varepsilon(\tilde g \inv x  \tilde g)
&= 
\varepsilon(\tilde g) \inv \varepsilon(x)  \varepsilon(\tilde g) \\
&=
[\tilde g \inv  \varepsilon(\tilde g) ] \inv
\cdot
\tilde g \inv \varepsilon(x) \tilde g
\cdot
[\tilde g\inv  \varepsilon(\tilde g)] \\
&= 
\tilde g \inv \varepsilon(x) \tilde g,
\end{align*}
so $\varepsilon(x) = x$ if and only if $\varepsilon(\tilde g\inv x \tilde g) = \tilde g\inv x \tilde g$.

To prove (\ref{item:transport3}), note that as above, 
$\tilde g\inv\tilde s \varepsilon(\tilde g)\in\tilde T(k)$
implies $\tilde g\inv s\tilde g \in T(k)$.
By (\ref{item:transport1}), we have $\tilde g\inv \tilde s\tilde g\in\tilde T(k)$.
Therefore,
\[
\tilde g\inv\varepsilon(\tilde g)
= (\tilde g\inv \tilde s\tilde g)\ (\tilde g\inv\tilde s \varepsilon(\tilde g))\inv \in \tilde T(k).
\qedhere
\]
\end{proof}

Now we consider a property of certain double coset spaces.
\begin{lemma}
\label{lemma:double-cosets}
Let $\tilde R$ denote a set of representatives for the double coset space
$$
G(k) \backslash (\tilde{G}(k)/\tilde{T}(k))^\varepsilon.
$$
Define a map
$\map{\phi}{G(k) \times \tilde R}{(\tilde{G}(k)/\tilde{T}(k))^\varepsilon}$
by
$$
(g,\tilde{r}) \longmapsto g \tilde{r} \tilde{T}(k).
$$
Then
\begin{enumerate}[(i)]
\item
The map $\phi$ is surjective.
\item
We have that
$\phi(g, \tilde{r}) = \phi(g', \tilde{r}')$
if and only if
$\tilde{r}=\tilde{r}'$
and $g\inv g' \in \lsup {\tilde{r}} T(k)$.
\end{enumerate}
\end{lemma}

\begin{proof}
This is straightforward.
\end{proof}

Now we recall some facts about Deligne-Lusztig virtual characters.
\begin{lemma}
\label{lemma:char-formulas}
One has the following:
\begin{enumerate}[(i)]
\item
\label{item:char-downstairs}
If $x\in G(k)$ is a regular element
and $\theta$ is a complex character of $T(k)$,
then
\[
(\DL_{T}^{G}\theta)(x)=
\sum_{\substack{g\in G(k)/T(k)\\ g\inv x g\in T(k)}}\theta(g\inv xg).
\]
\item
\label{item:char-upstairs}
Suppose that $\tilde{x}\in\tilde G(k)$ and that $\tilde x\varepsilon$
is a regular element of $\tilde G(k)\rtimes\autgp$.  Let $\tilde{\theta}$
be an $\varepsilon$-invariant character of $\tilde{T}(k)$, extended trivially to
$\tilde{T}(k) \rtimes\autgp$.
Then
\[
(\DL_{\tilde{T}}^{\tilde{G}}\tilde{\theta})_{\varepsilon}(\tilde{x})
=
\frac{1}{\vert\tilde{T}(k)\rtimes\autgp \vert}
	\sum_{\substack{
		\tilde h\in\tilde{G}(k)\rtimes\autgp \\
		\tilde{x}\varepsilon\in \tilde h(\tilde{T}(k)\rtimes\autgp)\tilde h\inv
	}}
	\tilde\theta (\tilde h\inv(\tilde{x}\varepsilon)\tilde h) .
\]
\end{enumerate}
\end{lemma}

\begin{proof}
For each formula, see \cite{digne-michel:non-connected}*{Proposition 2.6}.
For the second formula,
note that $C_{\tilde G}(\tilde x\varepsilon)$
contains no nontrivial unipotent elements.
Thus, if we let $S$ denote its connected part,
then $S$ is a torus, and
and the Green function
$Q_{S(k)}^{S(k)}$
that arises in this proposition takes the value $|S(k)|$
at $(1,1)$.
\end{proof}

\section{Proof of the Main Theorem}

From Lemma \ref{lem:twisted-conjugacy}(\ref{item:in-borus}),
$\tilde{S}$ is contained in an $\varepsilon$-invariant Borel subgroup of $\tilde{G}$,
so it follows from Proposition \ref{prop:intro} that
$S := \tilde{S} \cap G$ is 
a maximal $k$-torus in $G$, 
and $\tilde{S} = C_{\tilde{G}}(S)$.

Since $(\tilde{s}\varepsilon)^\ell = \NN(\tilde s)$, which is assumed regular in $\tilde{G}$ and thus
in $\tilde{G}\rtimes \autgp$,
we must have that $\tilde{s}\varepsilon$ is also regular in $\tilde G\rtimes\autgp$.
Lemma~\ref{lemma:char-formulas}(\ref{item:char-upstairs}) then implies that 
the left-hand side ($\lhs$) of 
Equation \eqref{eqn:main1} 
is equal to
\[
\frac{1}{\ell|\tilde{T}(k)|}\sum \tilde\theta (\tilde h\inv(\tilde{s}\varepsilon) \tilde h)
=
\frac{1}{\ell|\tilde{T}(k)|}\sum \tilde\theta
	\left(\tilde h\inv\cdot\tilde{s}\cdot\varepsilon (\tilde h)\cdot\varepsilon\right),
\]
where each sum is over the set
$\set{\tilde h\in \tilde{G}(k)\rtimes\autgp }{
\tilde{s}\varepsilon \in \tilde h \bigl(\tilde{T}(k)\rtimes \autgp \bigr) \tilde h\inv}$.
If $\tilde g\in \tilde{G}(k)$, then $\tilde g\varepsilon^i$ belongs to the index set
if and only if
$\tilde g\inv\tilde{s}\varepsilon(\tilde g)\in \tilde{T}(k)$.
Note that the set of such elements $\tilde g$ is a union of left cosets of $\tilde T(k)$.
Thus,
\begin{align*}
\lhs
&=
\frac{1}{\ell|\tilde{T}(k)|}
\sum_{i=0}^{\ell-1}
\sum_{\substack{\tilde g \in \tilde G(k)\\ \tilde g\inv\tilde{s}\varepsilon(\tilde g)\in \tilde{T}(k)}}
\tilde\theta \left(
	(\tilde g\varepsilon^i)\inv\cdot\tilde{s}\cdot\varepsilon (\tilde g\varepsilon^i) \cdot\varepsilon
	\right) \\
&=
\frac{1}{|\tilde{T}(k)|}
\sum_{\substack{\tilde g \in \tilde G(k)\\ \tilde g\inv\tilde{s}\varepsilon(\tilde g)\in \tilde{T}(k)}}
\tilde\theta (\tilde g\inv \tilde s \varepsilon(\tilde g)) \\
&=
\sum_{\substack{
	\tilde g \in \tilde G(k)/\tilde T(k)\\ \tilde g\inv\tilde{s}\varepsilon(\tilde g)\in \tilde{T}(k)
	}}
\tilde\theta (\tilde g\inv \tilde s \varepsilon(\tilde g)) \\
&=
\sum_{\substack{
	\tilde g \in (\tilde{G}(k)/\tilde T(k))^\varepsilon\\ \tilde g\inv\tilde{s}\tilde g\in \tilde{T}(k)
	}}
\tilde\theta (\tilde g\inv \tilde s \varepsilon(\tilde g)) ,
\end{align*}
where the last equality follows from Lemma~\ref{lem:twisted-conjugacy}(\ref{item:transport3}).
Let $s = \NN(\tilde s)$ and note that $\NN(\tilde g\inv \tilde s \varepsilon(\tilde g)) = \tilde g\inv s\tilde g$
for $\tilde g\in\tilde G(k)$.
Thus
$\lhs$
is equal to
\begin{equation}
\label{eq:lhs}
\sum_{\substack{
	\tilde g \in (\tilde{G}(k)/\tilde T(k))^\varepsilon\\ \tilde g\inv\tilde{s}\tilde g\in \tilde{T}(k)
	}}
\lsup{\tilde g}\theta (s) .
\end{equation}

On the other hand, it follows from Lemma~\ref{lemma:char-formulas}(\ref{item:char-downstairs})
that the right-hand side in Equation \eqref{eqn:main1} is equal to
\begin{align*}
\sum_{\tilde x \in G(k) \backslash (\tilde{G}(k) / \tilde{T}(k))^\varepsilon}
	\left(\DL_{\lsup{\tilde x}T}^G \lsup{\tilde x}\theta \right)
(s)
&=
	\sum_{\tilde x \in G(k) \backslash (\tilde{G}(k) / \tilde{T}(k))^\varepsilon}
	\ \sum_{\substack{g\in G(k)/\lsup{\tilde x}T(k)\\ g\inv s g\in\lsup{\tilde x} T(k)}}
	\lsup{\tilde x}\theta (g\inv s g)\\
&=
	\sum_{\tilde x \in G(k) \backslash (\tilde{G}(k) / \tilde{T}(k))^\varepsilon}
	\sum_{\substack{g\in G(k)/\lsup{\tilde x}T(k)\\ (g\tilde x)\inv s (g\tilde x)\in T(k)}}
	\lsup{g\tilde x}\theta (s)\\
&=
	\sum_{\substack{
		\tilde g \in (\tilde{G}(k)/\tilde T(k))^\varepsilon\\ \tilde g\inv s \tilde g\in T(k)
	}}
	\lsup{\tilde g}\theta (s),
\end{align*}
where the final equality follows from Lemma \ref{lemma:double-cosets}.
The last sum above is equal to \eqref{eq:lhs} by
Lemma~\ref{lem:twisted-conjugacy}(\ref{item:transport1}) and (\ref{item:transport2}),
which together imply that for $\tilde g\in (\tilde G(k)/\tilde T(k))^\varepsilon$,
$\tilde g\inv s \tilde g\in T(k)$ if and only if $\tilde g\inv\tilde{s}\tilde g\in \tilde{T}(k)$.

\begin{bibdiv}
\begin{biblist}

\bib{adler-lansky:lifting}{article}{
    author={Adler, Jeffrey D.},
    author={Lansky, Joshua M.},
     title={Lifting representations of finite reductive groups I: Semisimple conjugacy classes},
   journal={Canad. J. Math.},
    volume={66},
      year={2014},
     pages={1201\ndash 1224},
       doi={10.4153/CJM-2014-013-6},
    eprint={arXiv:1106.0706},
}

\bib{digne:arcata}{article}{
  author={Digne, Fran{\c {c}}ois},
  title={Shintani descent and ${\scr L}$ functions on Deligne-Lusztig varieties},
  conference={ title={The Arcata Conference on Representations of Finite Groups (Arcata, Calif., 1986)}, },
  book={ series={Proc. Sympos. Pure Math.}, volume={47}, publisher={Amer. Math. Soc.}, place={Providence, RI}, },
  date={1987},
  pages={61\ndash 68},
  review={\MR {933350 (89b:20083)}},
}

\bib{digne-michel:non-connected}{article}{
  author={Digne, Fran{\c {c}}ois},
  author={Michel, Jean},
  title={Groupes r\'eductifs non connexes},
  language={French, with English and French summaries},
  journal={Ann. Sci. \'Ecole Norm. Sup. (4)},
  volume={27},
  date={1994},
  number={3},
  pages={345\ndash 406},
  issn={0012-9593},
  review={\MR {1272294 (95f:20068)}},
}

\bib{digne:shintani}{article}{
  author={Digne, Fran{\c {c}}ois},
  title={Descente de Shintani et restriction des scalaires},
  language={French, with English summary},
  journal={J. London Math. Soc. (2)},
  volume={59},
  date={1999},
  number={3},
  pages={867\ndash 880},
  issn={0024-6107},
  review={\MR {1709085 (2001a:20081)}},
}

\bib{gyoja:liftings}{article}{
    author={Gyoja, Akihiko},
     title={Liftings of irreducible characters of finite reductive groups},
   journal={Osaka J. Math.},
    volume={16},
      date={1979},
    number={1},
     pages={1\ndash 30},
      issn={0030-6126},
    review={\MR {527011 (80k:20044)}},
}

\bib{kawanaka:shintani}{article}{
  author={Kawanaka, Noriaki},
  title={Shintani lifting and Gel\cprime fand-Graev representations},
  booktitle={The Arcata Conference on Representations of Finite Groups (Arcata, Calif., 1986)},
  series={Proc. Sympos. Pure Math.},
  volume={47},
  pages={147\ndash 163},
  publisher={Amer. Math. Soc.},
  place={Providence, RI},
  date={1987},
  review={\MR {933357 (89h:22037)}},
}

\bib{kottwitz-shelstad:twisted-endoscopy}{article}{
  author={Kottwitz, Robert E.},
  author={Shelstad, Diana},
  title={Foundations of twisted endoscopy},
  language={English, with English and French summaries},
  journal={Ast\'erisque},
  number={255},
  date={1999},
  pages={vi+190},
  issn={0303-1179},
  review={\MR {1687096 (2000k:22024)}},
}

\bib{lusztig:chars-finite}{book}{
  author={Lusztig, George},
  title={Characters of reductive groups over a finite field},
  series={Annals of Mathematics Studies},
  volume={107},
  publisher={Princeton University Press},
  place={Princeton, NJ},
  date={1984},
  pages={xxi+384},
  isbn={0-691-08350-9},
  isbn={0-691-08351-7},
  review={\MR {742472 (86j:20038)}},
}

\bib{silberger-zink:level-zero-matching}{article}{
  author={Silberger, Allan J.},
  author={Zink, Ernst-Wilhelm},
  title={An explicit matching theorem for level zero discrete series of unit groups of $\germ p$-adic simple algebras},
  journal={J. Reine Angew. Math.},
  volume={585},
  date={2005},
  pages={173\ndash 235},
  issn={0075-4102},
  review={\MR {2164626}},
}

\bib{steinberg:endomorphisms}{book}{
  author={Steinberg, Robert},
  title={Endomorphisms of linear algebraic groups},
  series={Memoirs of the American Mathematical Society, No. 80},
  publisher={American Mathematical Society},
  place={Providence, R.I.},
  date={1968},
  pages={108},
  review={\MR {0230728 (37 \#6288)}},
}

\end{biblist}
\end{bibdiv}

\end{document}